\newtheorem{theorem}{Theorem}
\newtheorem{definition}[theorem]{Definition}
\newtheorem{example}[theorem]{Example}
\newtheorem{proposition}[theorem]{Proposition}
\newtheorem{remark}[theorem]{Remark}
\begin{document}

\title{The generalized IFS Bayesian method and an associated  variational principle covering  the classical and dynamical cases}

\author{ Artur O. Lopes and  Jairo. K. Mengue}

\date{\today}

\maketitle

\begin{abstract}
{We introduce a general IFS Bayesian method for getting posterior probabilities from prior probabilities,  and also a generalized Bayes' rule, which will contemplate a dynamical, as well as a non-dynamical setting. Given a loss function ${l}$, we detail the prior and posterior items,   their consequences and exhibit several examples. 
Taking $\Theta$ as the set of parameters and $Y$ as the set of data (which usually provides {random samples}),
a general IFS is a measurable map $\tau:\Theta\times Y \to Y$, which can be interpreted as a family of maps $\tau_\theta:Y\to Y,\,\theta\in\Theta$.
The main inspiration for the results we will get here comes from a paper by Zellner (with no dynamics), where Bayes' rule is related to a principle of minimization of {information.} 
We will show that our IFS Bayesian method which produces posterior probabilities (which are associated to holonomic probabilities) is related to the optimal solution of a variational principle, somehow corresponding to the pressure in Thermodynamic Formalism, and also to the principle of minimization of information in Information Theory.  Among other results, we present the prior dynamical elements  and we derive the  corresponding posterior elements via the Ruelle operator of Thermodynamic Formalism; getting in this way a form of dynamical Bayes' rule.
}

\end{abstract}

\section{Introduction} \label{int}

 As far as we know, no previous work  described  the existence of a connection between Bayes' rule in probability theory and the Ruelle Operator in ergodic theory. This is one of  the main goals in the present work. It is also important to remark that the generalized IFS Bayesian method as introduced in section \ref{measIFS} (see definition \ref{maindefinition}) is so broad that it encompasses the classical Bayes' rule  as well as the dynamical case (obtained via the use of the Ruelle operator).  This assertion is evidenced by several examples (see examples \ref{maine}, \ref{maine2}, \ref{trite} and \ref{exholonomic}). Furthermore, in section \ref{epi} we present a variational principle and exhibit its connections with the generalized IFS Bayesian method as introduced in section \ref{measIFS} (see Theorem \ref{teo_eq_pressure}). We exhibit examples which prove that Theorem \ref{teo_eq_pressure}  provides also a connection between the main result present in \cite{Ze} (see example \ref{zeze}) and a well known result in ergodic theory concerning the variational principle of pressure and the Ruelle Operator (see example \ref{pressureholonomic}). Holonomic probabilities for IFS played an important role in our reasoning.

We initially suppose that $\Theta$ and $Y$ are measurable spaces. In the Bayesian setting one is interested in  probabilities (prior and posterior) and minimization problems which are in some way related to a loss function ${l}:\Theta\times Y\to\mathbb{R}$.
  In its simplest formulation, { by considering finite sets,} Bayes' rule is obtained as follows.
 Suppose we have a joint probability distribution of the variables $\theta\in \Theta,y\in Y$ and the individual  probabilities of $\theta$ and $y$.
 We are interested in a convenient  expression for the probability of $\theta$ conditioned on $y$.
 In this direction, we recall that the conditional probability of $\theta$  given $y$ is
 \begin{equation} \label{BR} P(\theta|y) = \frac{P(\theta ,y)}{p(y)}.
 \end{equation}
 On the other hand,  the conditional probability of $y$ given $\theta$ is
 \begin{equation} \label{BR0}P(y|\theta) = \frac{P(\theta,y)}{p(\theta)}.
 \end{equation}
 From equations \eqref{BR} and \eqref{BR0}, the conditional  probability  of $\theta$, given $y$, can be written as:
 \begin{equation} \label{BR1} P(\theta|y) = \frac{P(y|\theta) \, p(\theta)}{p(y) },
 \end{equation}
 which is known as Bayes' rule.

 This expression is quite useful in a large number of applications.
 In the Bayesian point of view,  $p(\theta)$ is the prior probability of $\theta$, and for any given new information (a sample $y$) it is updated by a new probability $P(\theta|y)$, which is called the posterior probability of $\theta$, given $y$.

 	One way to describe the difference between the Bayesian point of view and that of classical statistics (also called frequentist) is that in the former the probability is random and the sample is fixed, and in the latter, the probability is fixed and the sample is random.

  The frequentist point of view of probability relies on the average of possible samples.  Against this point of view, it is necessary to say that, in applied problems, sometimes you don't have access to a large number of samples
 	in order to estimate the mean value of an observable.

 From the Bayesian point of view it is  more appropriate to think
 	of a probability as a measure of a prior belief. Bayesian inference relies on a given
 	observed sample and not on the result of averaging over the sample space.

 { The reader may eventually get confused because we will talk about ergodic and dynamic issues - which somehow have a frequentist character - within a setting that in principle would be Bayesian. But this is quite natural, as we shall see, in our general framework.
 	
 	Quoting \cite{Abra}, page 128:
 	
 	\begin{center}\textit{
 ``Although the two approaches are conceptually different, they are nevertheless not  ``complete strangers'' to each other and may benefit from cooperation. Frequentist analysis of Bayesian procedures can be very useful for better understanding and validating their results. Similarly, Bayesian interpretation of a frequentist procedure is often helpful in providing the intuition behind it.''}\end{center}}

 In  Section \ref{measIFS} we will present a general IFS Bayesian method associated with a given loss function ${l}$. With this purpose we consider a  more general setting using iterated function systems (IFS) and holonomic probabilities (see  Definition \ref{klr} or  \cite{MO}, \cite{LO}),  which in some sense will  play the role of the posterior probability.  
 
 A general IFS is a measurable map $\tau:\Theta\times Y \to Y$. Usually we interpret $\tau$ as a family of maps $\tau_\theta:Y\to Y,\,\theta\in\Theta$, indexed by $\theta\in \Theta$.
 The classical Bayesian point of view can also be considered as a particular case of this setting.
 Alternatively, as a particular - and a dynamical - example of IFS one can consider the inverse branches of   the shift map  $\sigma$  acting on the symbolic space $Y=\{1,2,..,d\}^\mathbb{N}$. With this purpose, it is natural to consider $\Theta=\{1,2,...,d\}$, and
 the IFS $\tau$, where $\tau_\theta: Y \to  Y$ satisfies $\tau_\theta (y_1,y_2,..,y_n,...) = (\theta, y_1,y_2,...,y_n,...)$.

{ From another point of view,} in \cite{Ze}, relations were exhibited between an optimal information processing rule and the posterior Bayes probability.
In section \ref{epi} we extend to IFS and holonomic probabilities the main result in \cite{Ze}.  The results in \cite{Ze} are not of a dynamical nature but can be considered as a particular case of our reasoning as explained in Example \ref{zeze}. { This extension allows us to emphasize  the relationship } between the optimal information processing rule and the posterior  probability, described in \cite{Ze}, with the variational principle of pressure and equilibrium measures in Thermodynamic Formalism  (see Example \ref{trite}).

 Our main results concern the description of the IFS Bayesian method, in Section \ref{measIFS},  and its connection with a variational principle, which will be presented   in Section \ref{epi}. Such a connection could be seen, in some sense,  as similar to the relation of the results for the Ruelle operator (about
	eigenfunctions and eigenprobabilities) and the variational principle of pressure, which is a major result in Thermodynamic Formalism.  For related results see \cite{Nobel1} and \cite{LLV}.

Initially, in section \ref{meas} we will consider Bayes' rule in the measurable setting with more comments and examples concerning the Bayesian method. In section  \ref{measIFS} we present the IFS  Bayesian method with examples, which explain why  our version of Bayes' rule includes the usual one, as presented in section \ref{meas}. At the end of section  \ref{measIFS} we discuss an example - illustrating our reasoning - which is  related to Markov chains. In section \ref{dynIFS} we  present examples of a dynamical nature  regarding compact spaces and contractive IFS.  In section \ref{epi} we present a variational principle and its relation with the generalized IFS Bayesian method, given in Theorem \ref{teo_eq_pressure}.

\section{Bayes' rule in the  measurable setting} \label{meas}

In this section, we propose  to present briefly  the rule of Bayes in the measurable setting using a notation that is compatible  (as much as possible) with the one in \cite{Ze}.

Let us fix a probability $d\theta$ on $\Theta$ and a measure (which can also be a probability) $dy$ on $Y$, where $\Theta$ and $Y$ are measurable spaces. We fix  a prior probability density function $\pi_a:\Theta \to (0,+\infty)$, that is,  we assume $\int \pi_a(\theta)\,d\theta = 1$, and also a  measurable, bounded and positive function $f(y,\theta)=f(y|\theta)$.

{ From the Bayesian point of view it is natural to assume, for each parameter $\theta\in \Theta$,  that $y\to f_\theta(y)=f(y|\theta)$ is} a probability density function on $Y$, that is, it satisfies
\begin{equation} \label{ljk}\int f(y,\theta)dy  = 1,\,\,\forall \theta \in \Theta.
\end{equation}
 It is also natural to assume that the map $\theta \to f_\theta$ is an injective map.

 As an example,  consider an observer which  has uncertainty (or lack of knowledge) about the probability that
 { models} a certain random problem, but, for some reason, believes that the family $f_\theta (y) dy$ is a relevant class to consider. Moreover, the observer also has faith that some parameters $\theta$ are more suitable than others for the modeling, and then, the prior probability $\pi_a (\theta) d \theta$ in $\Theta$ will describe such a conviction.

 The Bayesian point of view contemplates the modeling of a random source that produces random samples $y$, but there exists uncertainty about which probability produces the data sample. This can be expressed in the following way:
each $f_\theta(y) dy$ determines a randomness in $y$, but as there exists also uncertainty in $\theta$,
which {in the initial belief on the observer}  is described by $\pi_a(\theta) d \theta$, it is necessary to consider a mean of this randomness for $y$ which is described  by the probability density function
\begin{equation} \label{ljk3} p(y) = \int   f_\theta(y)\,\pi_a (\theta) d\theta.
\end{equation}
In some cases, the prior probability is information that the observer has at his disposal and is not the result of subjectivity.

For each fixed $y\in Y$, the function $\theta \to f(y,\theta)$,  is usually  called the likelihood function. In order to align with the notation of \cite{Ze}, we can use the notation ${l}(\theta|y)=f(y|\theta)$ to stress the dependence on $\theta$ (given $y$).

Given the above prior elements, we can exhibit  in a natural way an associated  probability on $\Theta\times Y$. Let $\pi$ be the probability on $\Theta\times Y$ defined by
\begin{equation} \label{ljk4}d\pi := f(y|\theta)dy\,\pi_a(\theta)d\theta=f_\theta (y) dy\,\pi_a(\theta)d\theta ,
\end{equation}
that is, for any measurable and finite function $g:\Theta\times Y\to\mathbb{R}$,
\[\int g(\theta,y)d\pi(\theta,y) =   \iint g(\theta,y)f(y|\theta)dy\,\pi_a(\theta)d\theta.\]
Denote $d\nu := \pi_a(\theta)d\theta.$ It is easy to show that the $\theta-$marginal of $\pi$ is $\nu$. Furthermore,
 looking for the definition of $\pi$ we get that $f(y|\theta)dy$ is a probability kernel and  the right-hand side of equation \[d\pi = [f(y|\theta)dy]d\nu(\theta)\] is a disintegration of $\pi$.

{The random choice of $y$ according to the family of probabilities $f_\theta(y) d y$ is known in Statistics as the sampling procedure.  Each possible $y$ is   a latent sample.}
A  classical strategy in Bayesian inference is  the procedure of updating  a prior
belief distribution $\pi_a$  on $\Theta$ to a posterior distribution $\pi_p$ on $\Theta$, when the parameter of interest is connected
to observations (taken from  random samples $y\in Y$), via the likelihood function. All this is in some way related to Bayes' rule.

\bigskip

In order to get Bayes' rule it is natural  to consider a disintegration of $\pi$ in the opposite order of variables.
By integrating functions of variable $y$ we get that the $y-$marginal of $\pi$ is given by $d\mu:=p(y)dy$, where
\begin{equation}\label{normalizerp}
	p(y) = \int f(y|\theta)\pi_a(\theta)d\theta.
	\end{equation}
Finally, in order to complete the disintegration of $\pi$, we need to find a probability kernel $k$ such that
$d\pi = k(\theta|y)p(y)dy.$
With this purpose, just return to the definition of $\pi$ in order to get
\begin{equation}\label{posteriorBayes0}k(\theta|y) = \frac{f(y|\theta)\pi_a(\theta)}{p(y)}d\theta.
\end{equation}

\medskip

It is clear that for each given $y_0\in Y$,  the probability $k(\theta|y_0)$ is absolutely continuous with respect to $d\theta$. The associated probability density function is given by
\begin{equation}\label{posteriorBayes}
	\pi_p(\theta|y_0) := \frac{f(y_0|\theta)\pi_a(\theta)}{p(y_0)},
	\end{equation}
which will be  called the posterior probability density function on $\theta$ associated with the observed sample $y_0$. In this way $\pi_a(\theta)d\theta$ is the prior probability on $\Theta$ while $\pi_p(\theta|y_0)d\theta$ is the posterior probability on $\Theta$, given the  observed sample $y_0$.

The formula
\begin{equation}\label{posteriorBayes3}\pi_p(\theta|y) = \frac{f(y|\theta)\pi_a(\theta)}{p(y)}
\end{equation} (which is constructed from a change of order of variables in the disintegration) corresponds to Bayes' rule in the measurable setting.

In the Bayesian method, $\pi_a(\theta)$ describes the initial (prior) randomness of the probabilities indexed by $\theta$,  then, given the observed sample $y_0$, we update the randomness of the probabilities indexed by $\theta\in \Theta$, via the posterior $\pi_p(\theta|y_0).$

 Finally, returning to  the computations, we observe that from the disintegration of $\pi$,
\begin{equation}\label{piBayes} [f(y|\theta)dy]\pi_a(\theta)d\theta =d\pi = [\pi_p(\theta|y)d\theta] p(y)dy\end{equation}
we get a map from the  prior to the  posterior via Bayes' rule, which can be  illustrated by the following diagram
(see also figure 1 in \cite{Ze}):
\[ \begin{array}{c} f(y|\theta) \\ \pi_a(\theta) \end{array} \longrightarrow \begin{array}{c} \pi_p(\theta|y) \\ p(y) \end{array}.\]

\begin{remark}\label{rem}
From a theoretical point of view, even in the case where $f(y|\theta)=f_\theta(y)$  is not a family of probability density functions on $Y$, we get that, for each fixed point $y_0\in Y$, the function $\theta\to \pi_p(\theta|y_0)$, defined from \eqref{posteriorBayes} is a probability density function on $\Theta$. It could be called the posterior probability density function defined by the prior probability density function $\pi_a$ and the point $y_0\in Y$.
 An important issue when analyzing equation \eqref{posteriorBayes} is that formally for each fixed point $y_0\in Y$, usually, the function $\theta \to f(y_0|\theta)$   is not a probability density function on $\Theta$. So, we can interpret that \eqref{posteriorBayes} exhibits a normalization of $f$ in order to get a new function $\pi_p$ which is a probability density function on $\Theta$.

 Finally, we observe that, in order to assume that $f(y|\theta)$ is a probability density function on $Y$ for each $\theta$, it is also necessary to fix ({or to choose}) a measure $dy$ on $Y$ (see \eqref{ljk}). Such a $dy$ is not used in \eqref{posteriorBayes}.

	\end{remark}

The idea behind this procedure is as follows: we are not sure, a priori (in $\theta$), about which density $f_\theta$ is responsible for the generation of the sample $y$. This uncertainty is described by the prior $\pi_a$ on $\theta$ (this choice of $\pi_a$ may be subjective, or not, and sometimes  results from the previous intuition   of the observer).  In some examples taken from Physics, the description of this uncertainty on $\theta$ is a consequence of  the nature of the physical problem.
There are two possible points of view that are popular for applications.

I. We assume that one particular probability  associated with $f_{\theta_0}$, where $\theta_0$ is fixed among the $\theta \in \Theta$, is generating the sampling procedure. However, we do not know which $\theta_0$ produces the randomness.
Then, having obtained $y_0$ by sampling, we can ask (an inference) what would be the  probability (indexed $\theta$) that more likely fits the data obtained from the sample $y_0$.  Given $y_0$, the maximum $\theta_0$ in $\theta$ of the  likelihood $\theta \to {l}(\theta|y_0)$, in some sense, indicates  the parameter $\theta_0$  which  seems more likely  to match the data (see the beginning of Section 1.2 in \cite{Abra}). As the sampling procedure of getting $y_0$ is random, this value alone $\theta_0$ does not deterministically determine the solution to the problem;  it is natural to ask about the probability in $\theta$, of the $f_\theta$ indexed by $\theta$, which best  fit the data, given the sample $y_0$.
This will be described by $\pi_p$ in \eqref{posteriorBayes}. The main goal in this setting is to try to guess $\theta_0$ from the sample $y_0.$ This type of point of view in a dynamical setting is considered in  \cite{LLV} (in this case the sample $y_0$ is  obtained via a Birkhoff point where the  probability associated to $f_{\theta_0}$ is assumed to be ergodic).
\medskip

II.  There exists uncertainty in the $f_\theta$ that produces the sample $y_0$,  and this uncertainty is described by the prior probability on $\theta$ given by $\pi_a(\theta)$. The  prior probability $\pi_a$ (which for some reason the observer  believes is natural for modeling a certain real-life application)  on  $\theta\in \Theta$, has to be  updated, from the information given by the sample data $y_0$ - which we get from the specific applied problem under consideration -  in order to get another probability $\pi_p$, which suits best the model
under analysis (which will be described by \eqref{posteriorBayes}). This kind of problem appears in Decision Theory where it is usual to consider the case where the support of $\pi_a$ is the finite set $\{\theta_1, \theta_2\}$. As an example (to be more elaborated in Example \ref{edr}) one can consider a case where there exists a probability $1/3$ that the sample $y_0$ was obtained from $f_{\theta_1}$, and a probability $2/3$ that the sample $y_0$ was obtained from $f_{\theta_2}$ (these are the conditions for a simple Hypothesis test according to the Appendix in \cite{DLL}). This point of view in a dynamical setting was considered in Section 6 in \cite{FLL} and also in \cite{DLL}.

\smallskip

\smallskip

\begin{example} \label{popo}
 Let $\Theta = [0,1]$ and for each $\theta \in [0,1]$ we define the probability $p_\theta = (\theta,1-\theta)$ on $\{0,1\}$ (that is $p_\theta(0)=\theta$ and $p_\theta(1) = 1-\theta$) and the corresponding independent Bernoulli probability (using the same notation) $p_\theta$ on $\Omega =\{0,1\}^{\mathbb{N}}$. Suppose that in a game, balls with numbers zero or one are randomly picked following an unknown probability $p_{\theta_0}$. Let $Y=\{0,1\}^{1000}=\{(y_1,y_2,...,y_{1000})|y_i\in\{0,1\}\}$, that is, the set of sequences of zeros and ones of length 1000.

 For each $\theta \in \Theta$ consider the function $f(y|\theta) = \theta^{\#0}(1-\theta)^{\#1}$ where $\#0$ and $\#1$ denote the numbers of zeros and ones in the finite sequence $y=(y_1,y_2,y_3,...,y_{1000})$, respectively.  We observe that $f(y|\theta)$ describes for each fixed $\theta$ the probability of getting the sequence $y$ after playing 1000 times the game, following the probability $p_\theta$ and supposing independence and identical distributions. As we mentioned above, the probability $p_{\theta_0}$ is unknown. Consider the uniform Lebesgue probability $d\theta$ on $\Theta=[0,1]$. Let $\theta\to \pi_a(\theta)$ be a probability density function and suppose that we have a prior belief that $\theta$ belongs to any interval $I$ with probability $\int_I\pi_a(\theta)d\theta$. This is before the game starts.

 Now suppose that we just play the game 1000 times and observe the results getting a sample $\tilde{y}=(y_1,y_2,...,y_{1000})$. Suppose also that for this {observed sample} we have $\#0 = 900$ and $\#1 = 100$. Such {an observed} sample can be used to update our prior belief concerning $\theta$. For this purpose, we can use Bayes' rule. The posterior belief is that now $\theta$ belongs to any interval $I$ with probability given by $\int_I\pi_p(\theta|\tilde{y})d\theta$ where
 \[     \pi_p(\theta|\tilde{y}) = \frac{f(\tilde{y}|\theta)\pi_a(\theta)}{\int f(\tilde{y}|\theta)\pi_a(\theta)\,d\theta} =\frac{\theta^{900}(1-\theta)^{100}\pi_a(\theta)}{\int \theta^{900}(1-\theta)^{100}\pi_a(\theta)\,d\theta} .\]

\end{example}

\begin{example} \label{edr}  A random source is modeled in the following way:   samples are obtained in $Y=\{1,2\}$, where the prior $\pi_a$ on $\theta\in \Theta =\{\theta_1,\theta_2\}$, is given by
$\pi_a(\theta_1) = 1/3$ and $\pi_a(\theta_2)=2/3$.

$y\to f(y|\theta)=f_\theta(y)$ are assumed to be  independent probabilities on
$Y=\{1,2\}$, for each $\theta$,  and they are given in the following way:

a)  $f_{\theta_1}(1) =f(1|\theta_1 )= 3/10$ and $f_{\theta_1}(2)= f(2|\theta_1)= 7/10,$

b) $f_{\theta_2}(1) =f(1|\theta_2 )= 4/10$ and $f_{\theta_2}(2) =f(2|\theta_2)= 6/10.$

Then, {by considering the prior $\pi_a$ to $\theta$,}   the probabilities of $1$ and $2$ are given by
$$ p(1)= \pi_a(\theta_1)  \,\,\, 3/10 + \pi_a(\theta_2)   \,\,\, 4/10= 1/3 \,\,\, 3/10 + 2/3  \,\,\, 4/10 = 11/30,$$
and
$$ p(2)= \pi_a(\theta_1)  \, \,\, 7/10    + \pi_a(\theta_2)  \,\,\,  6/10= 1/3 \, \,\, 7/10    + 2/3  \,\,\,  6/10 = 19/30.$$

The probability $p(y)$ satisfies
$$ p(y) = \sum_\theta f(y|\theta) \pi_a (\theta).$$

The value $p(1)= 11/30$ describes the probability of getting $1$ as a sample obtained from the source by considering the prior $\pi_a$ to $\theta$.

Now we can ask about the posterior probability of $\theta$, given $y$, which will be denoted by $\pi_p(\theta|y)$. Suppose, for example,  we get $y=1$ as a sample. In this case, what is the posterior  probability  $\pi_p(\theta_2|1) $ that
this sample was obtained from $\theta_2$?

Bayes' rule determines
$$ \pi_p(\theta|y) = \frac{f(y|\theta) \, \pi_a (\theta) }{p(y) } .$$

Then,
$$\pi_p(\theta_1|1) = \frac{3/10\,\, 1/3}{11/30}=3/11\,\,\text{and}\,\,\pi_p(\theta_2|1)  = 8/11,$$
while
$$\pi_p(\theta_1|2) = \frac{7/10\,\, 1/3}{19/30}=7/19\,\,\text{and}\,\,\pi_p(\theta_2|1) =  12/19.$$

{ Once a sample $y_0$ has been obtained, we can update the uncertainty on $\Theta$, given by $\pi_a(\theta)$, by taking into account the new probability $\pi_p (\theta|y_0)$, as  in \eqref{posteriorBayes}.}
\end{example}

\section{Bayes' rule  and the IFS Bayesian method} \label{measIFS}

In this section, we consider another point of view,  which is associated with a measurable IFS $\tau$.  We will present a general {\bf IFS Bayesian method} which will work in a dynamical setting as well as in a non-dynamical setting. In this section, we consider that $\Theta$ and $Y$ are just measurable spaces and the IFS $\tau$ is measurable. This section is inspired by \cite{MO}, which considers a more particular case where $\Theta,Y$ are compact metric spaces and the IFS is contractible (which will be presented as  an example in Section \ref{dynIFS}).

The next example  contains fundamental ideas for a better  understanding of our reasoning in this section.

\begin{example}\label{meansample} This example proposes a generalization of formula \eqref{posteriorBayes}, which is Bayes' rule. The idea is to  replace the  observed sample $y_0\in Y$, by  a
probability $\rho$ on $Y$, in order to define a new kind of posterior probability density function which will be denoted by $\pi_p(\theta|\rho)$.
	As motivation, consider two points $y_1,y_2\in Y$ which play the role of two of the possible {samples}. After one observation, if we get the sample $y_1$ then we can update our prior belief of the probability on $\theta$ from the prior $\pi_a(\theta)d\theta$ to the posterior $\pi_p(\theta|y_1)d\theta$, where $\pi_p(\theta|y)$ is given by Bayes' rule \eqref{posteriorBayes3} as presented in section \ref{meas}. Similarly, if we observe the sample $y_2$ then we update it as another posterior, given by  $\pi_p(\theta|y_2)d\theta$.  Suppose now that 10 observations are made and as result the sample $y_1$ was obtained 3 times while the sample $y_2$ was obtained 7 times. In this case unfortunately we possibly have an uncertainty concerning the {sample}. Our uncertainty can be described by a probability $\rho = \frac{3}{10}\delta_{y_1}+\frac{7}{10}\delta_{y_2}$: we get a sample $y_1$ with probability $3/10$ and a sample $y_2$ with probability $7/10$ { (and this determines  the $\rho$ in the specific problem we are considering)}.

\smallskip

An observer of certain random phenomena realizes that the $\rho$ obtained above  is in agreement with the mathematical modeling he is looking for.  Assume this person is not  interested
(or, does not consider relevant for the problem) in getting posterior information on $\theta$ based on a single random sample, but the relevant information is in fact on the mean value of the posterior probability based on the global and random information given by $\rho$.

	 Therefore, alternatively, it is possible to update his belief by {defining a mean posterior}
	\[\pi_p(\theta|\rho):= \frac{3}{10}\pi_p(\theta|y_1)+\frac{7}{10}\pi_p(\theta|y_2)=\int \pi_p(\theta|y)\,d\rho(y),\]
	where $\pi_p(\theta|y)$ is given in \eqref{posteriorBayes3}.
	
	More generally, given any probability $\rho$ on $Y$ (which can be considered as a new information concerning the distribution of $y$ obtained by observations) we can update our prior $\pi_a$ to get the posterior  $\pi_p(\theta|\rho)$ defined by 	\begin{equation}\label{meansample2}
		\pi_p(\theta|\rho) :=      \int \frac{f(y|\theta)\pi_a(\theta)}{p(y)}d\rho(y)=\int \pi_p(\theta|y)\,d\rho(y).
	\end{equation}
	Clearly, given a sample $y_0\in Y$ and considering $\rho=\delta_{y_0}$ we get from \eqref{meansample2} the  Bayes' rule $$\pi_p(\theta|{y_0}) =  \frac{f(y_0|\theta)\pi_a(\theta)}{p(y_0)}.$$
	
	If, for example, the new information concerning the distribution of $y$, which is an observed distribution $\rho$, coincides with our prior belief $p(y)dy$ (that is, if $d\rho(y) = p(y)dy$) it is natural to expect that $\pi_p=\pi_a$. It is in accordance with the computation below
	\[\pi_p(\theta|p(y)dy) = \int_Y \frac{f(y|\theta)\pi_a(\theta)}{p(y)}p(y)dy = \int_Y {f(y|\theta)\pi_a(\theta)}dy = \pi_a(\theta).\]
	
\end{example}

From now on we will not update the prior $\pi_a$ to a posterior $\pi_p$ just from a single-sample procedure. The newer $\pi_p$ will be constructed from theoretical information concerning an IFS together  with other mathematical concepts.  We will provide examples that support the claim  that the classical approach presented in section \ref{meas} can be seen embedded in this new theoretical point of view.

Given a loss function ${l}: \theta \times Y\to \mathbb{R}$ and the IFS $\tau$, our main purpose in this section is to detail the prior and posterior items of the  general IFS  Bayesian method to be defined below.
Along the section, we will present examples to illustrate our reasoning.

The particular IFS described in the next two definitions will help to justify our point of view.

\begin{definition} The  particular case where the IFS is constant (that is, there exists $y_0\in Y$, such that $\tau_\theta(y)=y_0,\,\forall \theta,y$), will be called the {\bf constant IFS} associated to $y_0$ in $Y$.

\end{definition}

\begin{definition}   When the IFS is such that it is given by $\tau_\theta(y) = y,\,\forall\theta,y$, we call it the {\bf identity IFS}.
\end{definition}

 The identity IFS corresponds in some sense to the case where there is no dynamics at all.  We will show that our formalism corresponds to the  classical Bayes' rule when considering the constant IFS and also when considering the identity IFS.

\begin{definition} Given a prior probability $\nu$ on $\Theta$ and a measurable function $g:\Theta\times Y\to[0,+\infty)$  we will call $g$  {\bf a Jacobian} with respect to $\nu$ (or $\nu$-Jacobian) if
\begin{equation} \label{wqe}\int g(\theta,y) \,d\nu(\theta) = 1,\,\forall y \in Y.
\end{equation}
\end{definition}

We can interpret a $\nu$-Jacobian  $g$ as a family, indexed by  $y$,  of  probability density functions $\theta \to g_y(\theta)=g(\theta,y)$, of the variable $\theta$. In the above section \ref{meas} the function $\pi_p(\theta|y)$ plays the role of $g$. As in the above section, usually, we start with a non-Jacobian function (which eventually is a family of densities in the opposite variable - a likelihood). Our first objective is to describe what will be considered a normalization in the present setting. Remark \ref{rem} is useful to the reader at this point.

\begin{definition} \label{denor} Given a measurable and bounded function $l:\Theta\times Y\to(0,+\infty)$, an IFS $\tau$ and a probability $\nu$ on $\Theta$, we will call a \textbf{ normalizer pair} for $(l,\nu,\tau)$, a pair of measurable and positive functions $(\varphi,\psi)$, defined on $Y$,  such that the function
	\begin{equation}\label{normalizer}
		\bar{l}(\theta,y):= \frac{{l}(\theta,y)\cdot \psi(\tau_\theta(y))}{\psi(y)\cdot\varphi(y)}
	\end{equation}
	is a $\nu$-Jacobian.
\end{definition}

\begin{remark} \label{qwe} Note that by taking  $\psi \equiv 1$ and $\varphi(y) = \int {l}(\theta,y)d\nu(\theta)$, we always can get a normalizer pair satisfying Definition \ref{denor}, which we will call the {\bf canonical normalizer pair}. We point out that in some cases a different choice of  normalizer pair is more appropriate (for instance it happens in Example \ref{marma}).
\end{remark}

In section \ref{meas} the function $p$ defined in \eqref{normalizerp} plays the role of $\varphi$ in the canonical normalizer pair.   The next proposition shows that once we choose $\psi$ the function $\varphi$ is also determined.

\begin{proposition} \label{normalize}
	Given a measurable, positive and bounded function $\psi$ we have that  $(\varphi,\psi)$ is a normalizer pair for $(l,\nu,\tau)$ iff
	\begin{equation}\label{varphi}
	\varphi(y) = \frac{1}{\psi(y)}\int {{l}(\theta,y)\cdot \psi(\tau_\theta(y))}\,d\nu(\theta).
	\end{equation}
	Furthermore, the associated Jacobian function $\bar{l}$ given in \eqref{normalizer} satisfies
	\begin{equation}\label{barl}
		\bar{l}(\theta,y) = \frac{l(\theta,y)\psi(\tau_\theta(y))}{\int l(\theta,y)\psi(\tau_\theta(y))\,d\nu(\theta)}.
	\end{equation}
\end{proposition}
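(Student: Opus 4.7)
The proof plan is essentially a direct unwinding of Definition \ref{denor}, so the main task is just to check that the two conditions match up cleanly. I would work in two directions, each reducing to a single integration against $d\nu(\theta)$ for fixed $y$.

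First I would observe that by Definition \ref{denor}, $(\varphi,\psi)$ is a normalizer pair for $(l,\nu,\tau)$ precisely when the function $\bar{l}(\theta,y) = \frac{l(\theta,y)\psi(\tau_\theta(y))}{\psi(y)\varphi(y)}$ is a $\nu$-Jacobian, i.e., satisfies $\int \bar{l}(\theta,y)\,d\nu(\theta) = 1$ for every $y\in Y$. Since both $\psi(y)$ and $\varphi(y)$ depend only on $y$, they factor out of the $\theta$-integral, so this condition is equivalent to
\[
\psi(y)\varphi(y) \;=\; \int l(\theta,y)\,\psi(\tau_\theta(y))\,d\nu(\theta),
\]
for every $y\in Y$. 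Dividing by $\psi(y)$, which is strictly positive by hypothesis, gives exactly formula \eqref{varphi}. This establishes the ``iff'' statement.

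For the second assertion, I would simply substitute the expression for $\varphi(y)$ just obtained into the definition of $\bar{l}$, observe the $\psi(y)$ cancels, and arrive at \eqref{barl}. The only technical points to double-check along the way are that $\psi$ being bounded and positive justifies the divisions and the factoring out of the integral, and that $l$ being bounded and $\tau$ being measurable ensure that the $\nu$-integrals appearing are well-defined and finite (so the identities hold pointwise in $y$ and not merely $\nu$-a.e.). No serious obstacle is expected; the content of the proposition is really just the observation that once $\psi$ is fixed, the Jacobian normalization condition determines $\varphi$ uniquely and gives a closed-form expression for the resulting $\bar l$.
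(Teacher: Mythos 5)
Your proposal is correct and follows essentially the same route as the paper: both directions reduce to pulling the $y$-dependent factor $\psi(y)\varphi(y)$ out of the $\theta$-integral in the Jacobian condition, and the formula \eqref{barl} then follows by substituting \eqref{varphi} back into \eqref{normalizer}. Your extra remarks on positivity and boundedness justifying the division and the finiteness of the integrals are sensible but add nothing beyond what the paper's argument already implicitly uses.
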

\begin{proof}
Let $(\varphi,\psi)$ be a normalizer pair, and suppose $\psi$ is bounded.	We observe that \[\int \frac{{l}(\theta,y)\cdot \psi(\tau_\theta(y))}{\psi(y)\cdot\varphi(y)}\,d\nu(\theta) = 1, \,\,\,\forall\,y\]
	corresponds to
	\[\int {{l}(\theta,y)\cdot \psi(\tau_\theta(y))}\,d\nu(\theta) =\psi(y)\cdot\varphi(y),\,\,\,\forall\,y.\]
	Therefore we get \eqref{varphi}. On the other hand, given a measurable, positive, and bounded function $\psi$, by defining $\varphi$ from \eqref{varphi} we get a normalizer pair. Furthermore,
	\[\bar{l}(\theta,y):=\frac{{l}(\theta,y)\cdot \psi(\tau_\theta(y))}{\psi(y)\cdot\varphi(y)}=\frac{{l}(\theta,y)\cdot \psi(\tau_\theta(y))}{\int {{l}(\theta,y)\cdot \psi(\tau_\theta(y))}\,d\nu(\theta)}.\]
	
\end{proof}
 	
The meaning of the normalizer pair  $(\varphi,\psi)$ in the above definition is not the one considered in \cite{MO}, where $\varphi$ is assumed to be constant. As we will show in Section \ref{dynIFS}, a special normalizer pair, given by a constant function $\varphi$, occur when the IFS is contractible (see Definition \ref{contr}) and the spaces are compact metric spaces (in this case $\varphi$ is the main eigenvalue for a Transfer Operator). In \cite{MO} such as setting was considered.

In the case of  the {\it constant IFS} associated to $y_0$ in $Y$, we can  consider a special normalization  where $\varphi$  is taken constant, as we will  show in  the next example.
\begin{example} \label{kaj}
	Suppose $\tau_\theta(y) = y_0,\,\forall\theta,y$. Then we get 	$\psi(\tau_\theta(y))/\psi(y)=\psi(y_0)/\psi(y)$ for any $\psi$. Considering \eqref{normalizer} and choosing $\psi(y) = \int {l}(\theta,y)d\nu(\theta)$ and $\varphi(y)\equiv \psi(y_0)$ we get
	$$\int \frac{{l}(\theta,y)\psi(\tau_\theta(y))}{\psi(y)\varphi(y)}d\nu(\theta)=\frac{\int {l}(\theta,y)\,d\nu(\theta)}{ \psi(y)}=\frac{\int{l}(\theta,y)d\nu(\theta)}{\int {l}(\theta,y)d\nu(\theta)}=1 \,\forall\,y.$$ So $(\varphi,\psi)$ is a normalizer pair with $\varphi$ constant.
\end{example}

\begin{remark} \label{retp} Considering the normalizer pair of the above example we get that $$\frac{\psi(\tau_\theta(y))}{ \psi(y) \varphi(y)} = \frac{1}{\psi(y)} =\frac{1}{\int {l}(\theta,y)d\nu(\theta)} $$ corresponds to the normalizer function $\frac{1}{p}$ in  \eqref{normalizerp} and also to the normalization  $c$ on page 279 in \cite{Ze}. Furthermore, as $\frac{\psi(\tau_\theta(y))}{\psi(y)\varphi(y)}=\frac{\psi(y_0)}{\psi(y) \varphi(y)}$ is just a new function of the $y-$variable, the normalizer pair is essentially unique, that is, for any normalizer pair $(\varphi,\psi)$ we get $\frac{\psi(\tau_\theta(y))}{\psi(y)\varphi(y)} =\frac{1}{\int {l}(\theta,y)d\nu(\theta)}$. Another possible normalizer pair is given, for example, by $\psi=1$ and $\varphi(y) =\int {l}(\theta,y)d\nu(\theta))$ (the canonical normalizer pair), but we prefer to highlight the choice of $\varphi$ constant because it is similar to the normalization in \cite{MO}. 
\end{remark}

For a general IFS, it is not always possible to get normalizer pairs where $\varphi$ is constant.
This is the case for the identity IFS, as we will show next. 

\begin{example} \label{idi1} Suppose $\tau_\theta(y) = y,\,\forall\theta,y$. Then, as $\psi(\tau_\theta(y))=\psi(y)$, by considering \eqref{normalizer} we get
	$$1=\int \frac{{l}(\theta,y)\psi(\tau_\theta(y))}{\psi(y)\varphi(y)}d\nu(\theta)=\frac{\int {l}(\theta,y)\,d\nu(\theta)}{ \varphi(y)}=\,\forall\,y.$$
There exists a unique possible $\varphi$ which is $\varphi(y) = \int {l}(\theta,y)d\nu(\theta)$. In this case, we can not normalize ${{l}}$ with $\varphi$ constant.
\end{example}

\begin{remark}\label{p} As we saw, considering the above two examples ($\tau_\theta(y)=y_0$ or $\tau_\theta(y)=y$), there exists essentially one normalizer pair, that is, $$\frac{\psi(\tau_\theta(y))}{\psi(y)\varphi(y)} =\frac{1}{ \int {l}(\theta,y)d\nu(\theta)}.$$
It is easy to show that in general, if $\tau$ does not depend on $\theta$, this property is true. In this case, considering equation \eqref{barl}, the Jacobian function is given by
\[\bar{{l}}(\theta,y)= \frac{{l}(\theta,y)}{ \int l(\theta,y)d\nu(\theta)}.\]
 The expression above  reminds us of equation \eqref{posteriorBayes3}.

\end{remark}

An IFS can also be  seen as a dynamical system where  concepts like entropy and others can be defined.
Under such a point of view, when considering the IFS setting, it is natural to replace the concept of invariant probability (the classical dynamical point of view) with the concept of holonomic probability (see \cite{LO} and \cite{MO}).

\begin{definition}\label{klr} We say that a probability $\pi$ on $\Theta\times Y$ is \textbf{holonomic} with respect to the IFS $\tau$ if for any measurable and bounded function $g:Y\to\mathbb{R}$,
\begin{equation} \label{krasp0}\int g(y)\,d\pi(\theta,y) = \int g(\tau_\theta(y))d\pi(\theta,y).
\end{equation}
\end{definition}

\begin{definition} \label{sst} Given a $\nu$-Jacobian  $\bar{{l}}$, we say that a probability $\rho$ on $Y$ is \textbf{stationary} with respect to $(\bar{{l}},\nu,\tau)$, if for any measurable and bounded function $g:Y\to\mathbb{R}$, we have
\begin{equation} \label{krasp1}\int\int \bar{{l}}(\theta,y)g(\tau_\theta(y))\,d\nu(\theta) d\rho(y) = \int g(y)\,d\rho(y).
\end{equation}
\end{definition}

\begin{proposition}
	If $\rho$ is stationary  with respect to $(\bar{l},\nu,\tau)$, then the probability $\pi$ on $\Theta\times Y$, defined by
	$d\pi = 	\bar{{l}}(\theta,y)d\nu d\rho$, is holonomic and its $y-$marginal is the probability $\rho$.
\end{proposition}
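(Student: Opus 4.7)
The plan is to verify the two claims directly from the definitions of $\nu$-Jacobian, stationary probability, and holonomic probability; there is no real obstacle, since the statement is essentially a direct unfolding of the definitions, and the main task is to present the three short computations in a clean order.

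First I would check that $\pi$ is indeed a probability measure on $\Theta\times Y$: using Fubini and the $\nu$-Jacobian property of $\bar{l}$ from \eqref{wqe}, we have
\[
\int d\pi(\theta,y) = \int \!\!\int \bar{l}(\theta,y)\, d\nu(\theta)\, d\rho(y) = \int 1\, d\rho(y) = 1.
\]
Next I would compute the $y$-marginal. For any measurable bounded $g:Y\to \mathbb{R}$, Fubini and the Jacobian condition give
\[
\int g(y)\, d\pi(\theta,y) = \int g(y)\!\left( \int \bar{l}(\theta,y)\, d\nu(\theta)\right) d\rho(y) = \int g(y)\, d\rho(y),
\]
so the $y$-marginal of $\pi$ is $\rho$.

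For the holonomic property, given any measurable bounded $g:Y\to \mathbb{R}$, I would expand the right-hand side of \eqref{krasp0} using the definition of $\pi$:
\[
\int g(\tau_\theta(y))\, d\pi(\theta,y) = \int\!\! \int \bar{l}(\theta,y)\, g(\tau_\theta(y))\, d\nu(\theta)\, d\rho(y).
\]
By the stationarity of $\rho$ with respect to $(\bar{l},\nu,\tau)$ from \eqref{krasp1}, this integral equals $\int g(y)\, d\rho(y)$, which by the marginal computation above equals $\int g(y)\, d\pi(\theta,y)$. Hence $\pi$ satisfies \eqref{krasp0}, so it is holonomic.

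The only step that needs care is justifying the interchange of integrals via Fubini, which is valid because $\bar{l}$ is nonnegative (in the Jacobian definition) and $g$ is bounded; beyond that the proof is a direct chain of rewritings, and the stationarity of $\rho$ is precisely what makes holonomicity work.
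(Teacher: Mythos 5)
Your proof is correct and follows essentially the same route as the paper: compute the $y$-marginal via the $\nu$-Jacobian condition $\int \bar{l}\,d\nu = 1$, then verify holonomicity by applying the stationarity of $\rho$ and identifying the result with the marginal. The extra remarks on total mass and Fubini are harmless additions the paper leaves implicit.
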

\begin{proof}
	For any  measurable and bounded
  function $g:Y\to\mathbb{R}$ we have
	\[\int g(y)d\pi(x,y) = \int\int	\bar{{l}}(\theta,y)g(y)d\nu(\theta) d\rho(y) = \int 1\cdot g(y)d\rho(y)\] and so $\rho$ is the $y-$marginal of $\pi$.
	Furthermore, $\pi$ is holonomic because
	\[\int g(\tau_\theta(y))d\pi(x,y)= \int\int	\bar{{l}}(\theta,y)g(\tau_\theta(y))d\nu(\theta) d\rho(y) \]\[\stackrel{stationary}{=} \int g(y)\,d\rho(y) = \int g(y)\,d\pi(x,y).\]
	
\end{proof}

\begin{example} \label{kaj1}
	In the case $\tau_\theta(y)=y_0$ the unique stationary probability is $\rho=\delta_{y_0}$. Indeed, for any measurable and bounded  function $g:Y\to\mathbb{R}$ and any  stationary probability $\rho$ for $(\bar{{l}},\nu,\tau)$ we have
	\[\int g(y)d\rho(y) \stackrel{stationary}{=} \int\int \bar{{l}}(\theta,y)g(\tau_\theta(y))d\nu(\theta) d\rho(y) \]
	\[= \int\int \bar{{l}}(\theta,y)g(y_0)d\nu(\theta) d\rho(y) = \int 1\cdot g(y_0)\,d\rho(y)=g(y_0).\]
	This proves that $\rho=\delta_{y_0}$.
	Furthermore, a probability $\pi$ on $\Theta\times Y$ is holonomic iff its $y-$marginal is $\delta_{y_0}$. Indeed, by definition, $\pi$ is holonomic iff $\int g(\tau_\theta(y))d\pi = \int g(y)d\pi$ for any measurable and bounded
 function $g:Y\to\mathbb{R}$. The left-hand side is equal to $g(y_0)$ and so $\pi$ is holonomic iff $g(y_0) = \int g(y)d\pi$. Finally, for any given  $\nu$-Jacobian  $\bar{l}$ we get the holonomic probability $d\pi = \bar{{l}}(\theta,y_0)d\nu(\theta) d\delta_{y_0}(y)$.
\end{example}
\smallskip

Now we consider the identity IFS.

\begin{example} \label{idi}
	 Suppose $\tau_\theta(y)=y$,  for any $\theta$. We claim that any probability $\rho$ on $Y$ is stationary. This is not a surprise due to the fact that  $\tau_\theta(y)=y$, captures the case where there are no dynamics at all. In order to prove the claim, consider any probability $\rho$ on $Y$. We will prove that it is  stationary  with respect to $(\bar{{l}},\nu,\tau)$. For any measurable and bounded function $g:Y\to\mathbb{R}$ we have
	\[\int\int \bar{{l}}(\theta,y)g(\tau_\theta(y))d\nu(\theta) d\rho(y) = \int\int \bar{{l}}(\theta,y)g(y)d\nu(\theta) d\rho(y)= \int 1\cdot g(y)\,d\rho(y) .\]
	This proves that $\rho$ is stationary. As a consequence, for any probability $\rho$ on $Y$ we get that $d\pi = \bar{l}(\theta,y)d\nu d\rho$ is holonomic. More generally, any probability $\pi$ on $\Theta\times Y$ is holonomic. Indeed, as $\tau_\theta(y)=y$ we get
	\[\int g(\tau_\theta(y))d\pi = \int g(y)\,d\pi\]
	for any probability $\pi$ and any integrable function $g$.
\end{example}

Now, we will introduce a general IFS  Bayesian posterior method. In this way, we will consider a family of prior items which will allow us to obtain a family of posterior items.   We remember that in section \ref{meas}, for a fixed likelihood function $l(\theta|y)=f(y|\theta)$,  from a prior probability density function $\pi_a(\theta)$, and a given sample $y_0$, we get the posterior probability density function $\pi_p(\theta|y_0)$.  Now, from a prior probability density function $\pi_a$ on $\Theta$ and a probability $\rho $ over  $Y$, we will  get a new kind of posterior $\pi_p$ on $\Theta$. For a fixed  loss function ${l}$,   following a more broad point of view, we  do not have to  consider any more just the case $\pi_p(\theta|y_0)$ (on the variable $\theta$), where $y_0$ comes from probabilistic  sampling, but  instead $\pi_p(\theta|\tau,\psi,\rho)$ (on the variable $\theta$). Example \ref{meansample} illustrates this point of view. The introduction of a general $\rho $ over  $Y$, as described below,  will be an important issue in some other examples like \ref{marma},  \ref{trite} and \ref{exholonomic}.

\smallskip

\begin{definition}\label{maindefinition} The IFS Bayesian method is based on the definition of posterior items, from the information of prior and intermediate items as below:
	
\noindent
\textbf{Prior items:}\newline
1. a probability $d\theta$ on $\Theta$  \newline
2. a measurable  probability density function $\pi_a:\Theta\to (0,+\infty)$ and so the probability $d\nu(\theta) := \pi_a(\theta)d\theta$\newline
3. a measurable and bounded loss function ${l}:\Theta\times Y\to (0,+\infty)$\newline
\textbf{ Intermediate items:}     \newline
4. an IFS $\tau$\newline
5. a measurable and bounded function $\psi:Y\to (0,+\infty)$ and so a normalizer pair $(\varphi,\psi)$ for $({l},\nu,\tau)$, where $\varphi$ is given in \eqref{varphi}, and the Jacobian  (as given in \eqref{normalizer}, \eqref{barl})
\begin{equation} \label{kasp}	\bar{l}(\theta,y) = \frac{l(\theta,y)\psi(\tau_\theta(y))}{\int l(\theta,y)\psi(\tau_\theta(y))\,\pi_a(\theta)d\theta}\end{equation}
6. a stationary probability $\rho$  for $(\bar{{l}},\nu,\tau)$   in the sense of \eqref{krasp1}.

\smallskip
\noindent
\textbf{Posterior items:} \newline
1. The holonomic probability $\pi = \bar{l}(\theta,y)\pi_a(\theta)d\theta d\rho(y)$\newline
2. The probability density kernel $\pi_p(.|\tau,\psi,y):\Theta\to\mathbb{R}$ given by
\begin{equation}\label{Artur}\pi_p(\theta|\tau,\psi,y) = \bar{l}(\theta,y)\pi_a(\theta) = \frac{l(\theta,y)\psi(\tau_\theta(y))\pi_a(\theta)}{\int l(\theta,y)\psi(\tau_\theta(y))\,\pi_a(\theta)d\theta}\end{equation}
3. The probability $\nu_p$ which is the $\theta-$marginal of $\pi$\newline
4. The probability density function $\pi_p(.|\tau,\psi,\rho):\Theta\to\mathbb{R}$, which satisfies $d\nu_p(\theta) = \pi_p(\theta|\tau,\psi,\rho)d\theta$. That is,
\begin{equation}\label{IFSBayes}
	\pi_p(\theta|\tau,\psi,\rho) =\int_Y \frac{l(\theta,y)\psi(\tau_\theta(y))\pi_a(\theta)}{\int l(\theta,y)\psi(\tau_\theta(y))\pi_a(\theta)d\theta}d\rho(y).
\end{equation}

\end{definition}

 The Posterior items are uniquely determined by the Prior and Intermediate items.
The probability $\nu=\pi_a(\theta)d\theta$ is called the prior probability on the variable $\theta$. We point out that given the prior items 1, 2, 3 above, in some cases, there may exist more than one choice for the intermediate items 4,  5, and 6, and finding natural, or suitable, choices can be the main issue.  When trying to detail the items on the IFS Bayesian method, its inherent flexibility allows one to get different kinds of  information depending on these choices (which can be chosen according to  convenience or context). We will illustrate this claim via several different types of examples.

 When considering the classic Bayesian case, the information obtained from an observed sample  is  the key element for setting  the Intermediate items (see Examples \ref{maine} and \ref{maine2}).

We observe that we can always consider as intermediate the function $\psi=1$ (which corresponds to the canonical normalizer pair) and for such an intermediate item we get
\begin{equation} \label{wrt}\pi_p(\theta|\tau,1,y) =  \frac{l(\theta,y)\pi_a(\theta)}{\int l(\theta,y)\,\pi_a(\theta)d\theta}
\end{equation}
and
\begin{equation} \label{wrt1}\pi_p(\theta|\tau,1,\rho) =\int_Y \frac{l(\theta,y)\pi_a(\theta)}{\int l(\theta,y)\pi_a(\theta)d\theta}d\rho(y).
\end{equation}
Depending on the problem of interest the choice $\psi=1$ is not adequate.

\bigskip

We can say that equations \eqref{Artur} and \eqref{IFSBayes} play the role of a generalized Bayes' rule (in two different forms).
We will call $\pi_p(\theta|\tau,\psi,y) $ the posterior probability density function of the variable $\theta$ associated to the  $(\tau,\psi,y)$, and $\pi_p(\theta|\tau,\psi,\rho)$ the posterior probability density function of the variable $\theta$ associated to intermediate  items $(\tau,\psi,\rho)$.

\begin{remark} \label{mui} In order to get the usual  Bayes' rule  an optional assumption regarding the above-defined items can be taken:

- fix a measure $dy$ on $Y$ and assume ${l}$ is such that for all $\theta\in \Theta,$
\begin{equation} \label{juju}\int {l}(\theta,y) \, d y=1.
\end{equation}

{ In this case, the function $l(\theta,y)$ is a probability density function with respect to $d y$, for each fixed $\theta \in \Theta$.  Taking $\psi=1$ we get from \eqref{wrt} a Bayes' rule expression similar to \eqref{posteriorBayes3}},  where the density  $p(y)$ (with respect to $dy$) corresponds to $\int l(\theta,y) \pi_a(\theta) d \theta$ (see also \eqref{normalizerp}).

It is not a necessary hypothesis for the application of the method, as discussed in Remark \ref{rem}.
\end{remark}

 The next two examples will clarify the assertion   that the IFS Bayesian method used to construct $\pi_p$, as given in \eqref{Artur} and \eqref{IFSBayes}, extends Bayes' rule as expressed in \eqref{posteriorBayes} and \eqref{posteriorBayes3}.

\begin{example} \label{maine} Consider an observed sample $y_0\in Y$.
 The specific nature of the generation of this sample $y_0$ is not relevant to what comes next. Then, it is natural to consider the constant IFS, where $\tau_\theta(y) = y_0$, for any $y\in Y$, $\theta \in \Theta$. We want to recover Bayes' rule of Section \ref{meas}. With this purpose, we explain also the connections between notations of both sections.
	
	\noindent
	In this case the prior items are:\newline
	1. a probability $d\theta$ on $\Theta$  \newline
	2. a density function $\pi_a:\Theta\to(0,+\infty)$ and so the probability $d\nu(\theta) := \pi_a(\theta)d\theta$\newline
	3. a loss function ${l}:\Theta\times Y\to(0,+\infty)$. We can suppose the remark \ref{mui} is satisfied and write also ${l}(\theta,y)=f(y|\theta)$, using the notation of Section \ref{meas}.
	
	By considering the intermediate item $\tau_\theta(y) = y_0$, the next intermediate items 5. and 6. are essentially unique: \newline
	5. The function $\psi = 1$ and so the canonical normalizer pair $\psi(y) =1$ and $\varphi(y)=\int l(\theta,y)\pi_a(\theta)d\theta$. So the $\nu$-Jacobian  is $\bar{{l}}(\theta,y) := \frac{{l}(\theta,y)}{\int {l}(\theta,y)\pi_a(\theta)d\theta}.$ \newline
	It follows that \[\bar{{l}}(\theta,y)= \frac{f(y|\theta)}{p(y)},\] where $p(y)$ is given in \eqref{normalizerp} in Section \ref{meas} and coincides with $\varphi(y)$.   \newline
	6. The stationary probability $\rho=\delta_{y_0}$  (which is the unique stationary probability for such IFS).
	
Now we can consider the posterior items. We observe that as for $\tau\equiv y_0$, the intermediate items $\psi,\varphi,\rho$ are essentially unique, we can consider that $\pi_p(\theta|\tau,\psi,\rho)$ corresponds in some sense to a computation of $\pi_p(\theta|y_0)$.

	\smallskip
	
The posterior items are given by:

\noindent	
	1. The probability
	\[d\pi = \bar{{l}}(\theta,y)\pi_a(\theta)d\theta d\rho(y) =
	 \frac{{l}(\theta,y)\pi_a(\theta)}{\int {l}(\theta,y)\pi_a(\theta)d\theta}\,d\theta\, d\delta_{y_0}(y) = \frac{f(y|\theta)\pi_a(\theta)}{p(y)}\,d\theta\, d\delta_{y_0}(y). \]
	
	 \noindent
2. The probability density kernel $\pi_p(.|\tau,\psi,y):\Theta\to\mathbb{R}$ given by
\[\pi_p(\theta|\tau,\psi,y) = \frac{l(\theta,y)\pi_a(\theta)}{\int l(\theta,y)\,\pi_a(\theta)d\theta}=\frac{f(y|\theta)\pi_a(\theta)}{p(y)}.\]	
	\noindent
3. The probability $\nu_p$, which is the $\theta-$marginal of $\pi$, satisfies
	\[d\nu_p(\theta) = \frac{{l}(\theta,y_0)\pi_a(\theta)}{\int {l}(\theta,y_0)\pi_a(\theta)d\theta}\,d\theta =\frac{f(y_0|\theta)\pi_a(\theta)}{p(y_0)}\,d\theta .\]
	
	\noindent
	4. The probability density function
	\[\pi_p(\theta|\tau,\psi,\delta_{y_0}) = \frac{l(\theta,y_0)\pi_a(\theta)}{\int l(\theta,y_0)\pi_a(\theta)d\theta}.\]
	Using the correspondence of notations we get
	\[\pi_p(\theta|\tau,\psi,\delta_{y_0}) = \frac{f(y_0|\theta)\pi_a(\theta)}{p(y_0)}.\]
This density function $\pi_p$ is exactly the posterior probability density function given the sample $y_0$, as presented in \eqref{posteriorBayes} when considered $f=l$. 
	
	We observe that $y_0$ is any fixed point of $Y$, and so we can also denote this generic point by $y$. Finally we can write that for a given sample $y$, the posterior density function is
\begin{equation}  \label{trew}
	\pi_p(\theta|y) = \frac{{l}(\theta,y)\pi_a(\theta)}{\int {l}(\theta,y)\pi_a(\theta)d\theta} = \frac{f(y|\theta)\pi_a(\theta)}{p(y)},
\end{equation}
where $\pi_a$ is the  prior probability density function. This corresponds to \eqref{posteriorBayes3}.
\end{example}

\begin{example}\label{maine2} In this example we consider the identity IFS, which is the one given by  $\tau_\theta(y) = y$, for any $y\in Y$, $\theta \in \Theta$. As in the above example, we want to recover Bayes' rule of Section \ref{meas}.
	
	\noindent
	In this case the prior items are:\newline
	1. a probability $d\theta$ on $\Theta$  \newline
	2. a density function $\pi_a:\Theta\to(0,+\infty)$ and so the probability $d\nu(\theta) := \pi_a(\theta)d\theta$\newline
	3. a loss function ${l}:\Theta\times Y\to(0,+\infty)$. We can suppose again that remark \ref{mui} is satisfied and write also ${l}(\theta,y)=f(y|\theta)$, using the notation of Section \ref{meas}. \newline
	The intermediate items are: \newline
	4. the IFS $\tau_\theta(y) = y$\newline
	5. The (essentially unique) normalizer pair $\psi=1$ and $\varphi(y) =\int {l}(\theta,y)\pi_a(\theta)d\theta$.  It follows, as in above example, that \[\bar{l}(\theta,y)= \frac{{l}(\theta,y)}{ \int {l}(\theta,y)\pi_a(\theta)d\theta}=\frac{{l}(\theta,y)}{ \int {l}(\theta,y)\pi_a(\theta)d\theta}=\frac{f(y|\theta)}{p(y)},\] where $p(y)$ is given in \eqref{normalizerp} in Section \ref{meas}.\newline
	6. Any probability $\rho$ on $Y$ (any one is stationary).
	
	\bigskip
	
	\noindent
	The posterior items are given below:\newline
	1. The probability
	\[d\pi =   \frac{{l}(\theta,y)\pi_a(\theta)}{\int {l}(\theta,y)\pi_a(\theta)d\theta}\,d\theta\, d\rho(y) = \frac{f(y|\theta)\pi_a(\theta)}{p(y)}\,d\theta\, d\rho(y).\]
	2. The probability density kernel $\pi_p(.|\tau,\psi,y):\Theta\to(0,+\infty)$ given by
	\[\pi_p(\theta|\tau,\psi,y) = \frac{l(\theta,y)\pi_a(\theta)}{\int l(\theta,y)\,\pi_a(\theta)d\theta}.\]	
	3. The probability $\nu_p$ which is the $\theta-$marginal of $\pi$
	\[d\nu_p(\theta) = \left[\frac{\int l(\theta,y)\pi_a(\theta)}{\int {l}(\theta,y)\pi_a(\theta)d\theta}d\rho(y)\right]\,d\theta =\left[\int \frac{f(y|\theta)\pi_a(\theta)}{p(y)}d\rho(y)\right]\,d\theta .\]
	4. The probability density function $\pi_p(\theta|\tau,\psi,\rho):\Theta\to\mathbb{R}$ which is given by
	\begin{equation} \label{poyu} \pi_p(\theta|\tau,\psi,\rho) = \int \frac{l(\theta,y)\pi_a(\theta)}{\int {l}(\theta,y)\pi_a(\theta)d\theta}d\rho(y)=\int \frac{f(y|\theta)\pi_a(\theta)}{p(y)}d\rho(y).
\end{equation}
	
  As we set $p(y)= \varphi(y)= \int l(\theta,y)\,\pi_a(\theta)d\theta $, we get in posterior item 2 above  the  {same} expression given by \eqref{posteriorBayes3}.
The  equation \eqref{poyu} does not match with \eqref{posteriorBayes} and \eqref{posteriorBayes3} because we have to consider a mean using $\rho$. While in \eqref{posteriorBayes} we are computing $\pi_p(\theta|y_0)$ for an observed sample $y_0$, in \eqref{poyu} we are computing  $\pi_p(\theta|\tau,\psi,\rho)$. We remark that for the identity IFS any probability $\rho$ is stationary and produces a different $\pi_p$. We can take, as a special example, $\rho=\delta_{y_{0}}$. { For this particular stationary prior item 6. we get that \eqref{poyu} match with \eqref{posteriorBayes} in Section \ref{meas}. Furthermore, such prior $\rho=\delta_{y_0}$ will be essential in Example \ref{zeze}.}

\end{example}

\begin{remark}\label{sample2}  Example \ref{meansample} describes an interpretation of the formula \eqref{poyu}.
\end{remark}

\begin{example} \label{marma} Markov Measures and the IFS  Bayesian method.
	
	\noindent
	Suppose that $\Theta = Y = \{1,...,d\}$ and let $\tau_\theta(y) = \theta$,  for all $y,\theta$. As we will see next it is natural to consider such  IFS. The introduction of the possibility to set a suitable $\rho $ over  $Y$, as considered in Intermediate item 6 in Definition \ref{maindefinition}, will be necessary to get natural posterior items. 
	
	Consider the prior items:\newline
	1. let $d\theta $ be the counting measure (not a probability)\newline 	
	2. $\pi_a\equiv 1$ and so  $d\nu(\theta) = d\theta$\newline
	3. a positive loss function ${l}:\Theta\times Y\to\mathbb{R}$\newline
	Consider the intermediate items below: \newline
	4. the IFS $\tau_\theta(y) = \theta$\newline
	5. There exists a lot of normalizer pairs, but we will consider a special one by taking $\varphi$  constant. From the Perron-Frobenius theorem, there exists a (unique) positive number $\lambda$ and a (unique except by a multiplicative constant) positive vector $h$ such that
	\[\sum_{\theta}h(\theta){l}(\theta,y) = \lambda\cdot h(y)\,\forall y.\]
	We set the normalizer pair $\psi=h$ and $\varphi(y)\equiv \lambda$. Therefore,
\begin{equation} \bar{l}(\theta,y) = \frac{l(\theta,y)\,\psi(\theta)}{\lambda \,\psi(y)\,}=\frac{l(\theta,y)\psi(\theta)}{\sum_\theta l(\theta,y)\psi(\theta)}.
	\end{equation}
	We observe that $\bar{{l}}$ to  be a Jacobian means that it defines  a column stochastic matrix:
	\[\sum_\theta \bar{l}(\theta,y) = 1,\,\forall\,y.\]
	6. A stationary probability $\rho=(\rho_1,...,\rho_d)$ on $Y$ satisfies, for any $g$
	\[\sum_{j}\sum_\theta \bar{l}(\theta,j)g(\theta)\rho_j = \sum_j (g(j))\rho_j\]
	Choosing for each $i\in Y$ the function $v=I_i$ (it is a basis for the space of functions on $Y$) we get that $\rho$ needs to satisfy
	\[\sum_j \bar{l}(i,j)\rho_j = \rho_i, \,\forall\,i,\] that is, the probability vector $\rho$ satisfies
	\[ (\bar{l})\cdot \rho =\rho.\]
	Such a $\rho$ is unique (and it is usually known in the literature of Stochastic Processes as the stationary probability vector associated with the column stochastic matrix $(\bar{l})$).
	
	\bigskip
	
	\noindent
	The posterior items are:\newline
	1. The Markov measure for cylinders of length two:
	\[\pi(i,j) = \bar{l}(i,j)\cdot \rho(j)\]In this way, for any function $g$,
	\[\sum_{i,j}g(i,j)\pi(i,j) = \sum_{i,j} \bar{l}(i,j)g(i,j)\rho(j).\]
	2. The probability density kernel $\pi_p(.|\tau,\psi,y):\Theta\to\mathbb{R}$ given by
	\[\pi_p(\theta|\tau,\psi,y) = \frac{l(\theta,y)h(\theta)}{\sum_\theta l(\theta,y)\,h(\theta)}=\bar{l}(\theta,y).\]	
	3. The probability $\nu_p$, which is the $\theta-$marginal of $\pi$,
	$$\nu_p(i) = \sum_j \bar{l}(i,j)\cdot \rho(j) = \rho(i).$$
	4. The probability density function $$\pi_p(i) = \sum_j \bar{l}(i,j)\cdot \rho(j)=\rho(i).$$

\end{example}

\begin{remark} The posterior probability density kernel $\pi_p(\theta|\tau,\psi,y) = \bar{l}(\theta,y)$ coincides with the transition probability $P(\theta|y)$ which is given by the column stochastic matrix $\bar{l}$  (a  natural way  to define a   Markovian stochastic process taking values on a finite set). The above  $\pi_p$ represents  Bayes'  rule according to \eqref{Artur} and \eqref{IFSBayes}.
\end{remark}

\section{Examples concerning contractible IFS } \label{dynIFS}

In this section, we present some more examples concerning the generalized Bayes' rule and the IFS Bayesian method in a dynamic context. With this purpose we assume in such examples that $\Theta,Y$ are compact metric spaces, respectively, with metrics $d_1$ and $d_2$, and we consider on $\Theta\times Y$ the metric
$$d((\theta_1,y_1),(\theta_2,y_2)) = d_1(\theta_1,\theta_2)+d_2(y_1,y_2).$$

 We will present a dynamical version of  Bayes' rule for  an IFS $\tau$, in consonance with the reasoning of \cite{MO},  and  assuming some regularity for the loss function. The meaning of this statement is associated with the fact that in this section  we will assume conditions such that the normalizer pair $(\varphi,\psi)$ for $({l},\nu,\tau)$,  can be taken in  such way that $\varphi$ is constant.
\smallskip

The next example will  help  the reader to make connections with classical Thermodynamic Formalism as described in \cite{PP}.

\begin{example} \label{trite} General equilibrium measures in the full shift
	
	\noindent
	Suppose that $\Theta=\{1,...,d\}$, $Y = \{1,...,d\}^{\mathbb{N}}$ and let $\tau_\theta(y) = (\theta,y_1,y_2,...)$. In such a setting the space $\Theta \times Y$ is also denoted by $\Omega$ and we have an invertible map $\Gamma:\Omega \to Y$ given by $\Gamma(\theta,(y_1,y_2,...)) = (\theta,y_1,y_2,y_3,...)$. Such a map allows us to identify $\Omega$ as $Y$ and consequently any function or probability on $\Omega$ can be seen as a function or probability on $Y$, respectively.   The loss function $l(\theta,y)$ we will consider here is of the form $l(\theta, y)=e^{v(\tau_\theta (y))},$ for some Lipschitz  function $v:Y \to \mathbb{R}.$
	
	Consider the prior items below:\newline
	1. let $d\theta $ be the counting measure (not a probability) on $\Theta$\newline 	
	2. $\pi_a\equiv 1$ and so  $d\nu(\theta) = d\theta$\newline
	3. a Lipschitz function $v:Y \to\mathbb{R}$ and the loss function $l:\Omega \to\mathbb{R}$, given by $l(\theta,y) = e^{v(\tau_{\theta}(y))}.$\newline
 Consider the following intermediate items:\newline
	4. The IFS $\tau_\theta(y) = (\theta,y_1,y_2,...)$ as presented above.\newline
	5. From the Ruelle-Perron-Frobenius theorem (see \cite{PP}), there exists a unique positive number $\lambda$ and a unique (except by a multiplicative constant)   positive Lipschitz  function $h: Y \to \mathbb{R}$  such that
\begin{equation} \label{qued}
	\sum_{\theta}l(\theta,y)h(\tau_\theta( y))=\sum_{\theta}e^{v(\theta, y_1, y_2,... ) }h(\theta, y_1, y_2,...) = \lambda\cdot h(y)\,\forall y.\end{equation}
As  $h(\theta y)=h(\tau_\theta(y))$ we get a normalizer pair by setting $\psi=h$ and $\varphi(y)\equiv \lambda$.
In this way
\begin{equation} \label{qued}\overline{l}(\theta,y)= \frac{l(\theta,y)h(\theta y)}{\sum_\theta l(\theta,y)h(\theta y)}= \frac{l(\theta,y)h(\theta y)}{\lambda h(y)}.
\end{equation}
is a Jacobian. \newline 	
	6. There exists a unique stationary probability $\rho$ on $Y$ satisfying, for any $g$
	\[\int_Y\sum_\theta \bar{l}(\theta,y)g(\theta y)d\rho(y) = \int g(y)d\rho(y).\]  The probability $\rho$ on $Y$ is invariant for the shift map $\sigma$, and it is the equilibrium probability  for the function $v:Y \to \mathbb{R}$ (in the sense of \cite{PP}).

	We can also identify $\rho$ as a probability on $\Omega$ using the map $\Gamma$.
	
	\bigskip
	
	\noindent
	The posterior items are:\newline
	1. The equilibrium probability $\pi=\rho$ (considering the identification of $\Omega=\Theta\times Y$ and $Y$ given by $\Gamma$):
	\[\int g(\theta, y)d\pi(\theta,y) = \int_Y \sum_\theta \bar{l}(\theta y)g(\theta y)d\rho(y)= \int g(y)d\rho(y)=\int g\,d\rho.\]
 	2. The probability density kernel $\pi_p(.|\tau,\psi,y):\Theta\to\mathbb{R}$ given by
	\[\pi_p(\theta|\tau,\psi,y) = \frac{l(\theta,y)h(\theta y)}{\sum_\theta l(\theta,y)\,h(\theta y)}=\bar{l}(\theta,y).\]	
	3. The probability $\nu_p$, which is the $\theta-$marginal of $\pi$,
	$$\nu_p(i) = \int_Y \bar{l}(i,y)d\rho(y) = \pi([i]).$$
4. The probability density function $$\pi_p(i) = \pi([i]).$$
	Above we are denoting by $[i]$ a cylinder set of length one.
	
The above function $\bar{l}$ is sometimes called the Jacobian of the equilibrium probability $\rho$. The introduction of the possibility to set a convenient probability  $\rho $ over  $Y$, which was considered in Intermediate item 6 in Definition \ref{maindefinition}, was an important issue to be able to get the posterior probability  $\pi_p$ in posterior item 2. above. The important issue in this example is to get the posterior probability. The law $\pi_p$ represents Bayes'  rule according to \eqref{Artur} and \eqref{IFSBayes} in this case.
\end{example}

	\noindent
	\begin{definition} \label{contr}
	Suppose that $\Theta$ and $Y$ are compact metric spaces.  We say that the IFS $\tau$ is contractible (or satisfies the uniform contraction property) when there exists $0<\gamma<1$, such that,
	\[d(\tau_{\theta_1}(y_1) , 	\tau_{\theta_2}(y_2))\leq \gamma [d_1(\theta_1,\theta_2)+d_2(y_1,y_2)]\]
	for any $\theta_i\in\Theta$ and $y_i\in Y$.
	\end{definition}
	
	\bigskip
	
\begin{example}\label{exholonomic} Holonomic probabilities for contractible IFS (see \cite{MO}).

	\noindent
	Consider the following prior items:\newline
	1. a probability $d\theta$ on $\Theta$  \newline
	2. a probability density function $\pi_a:\Theta\to(0,+\infty)$ and so $d\nu(\theta) := \pi_a(\theta)d\theta$\newline
	3. a Lipschitz continuous function $\mathfrak{l}:\Theta\times Y\to\mathbb{R}$ and the loss function  $l:\Theta\times Y \to(0,+\infty)$, given by $l(\theta,y) = e^{\mathfrak{l}(\theta,y)}.$\newline
 Consider the following intermediate items:\newline
	4. a contractible IFS $\tau$.\newline
	5. we will consider a special normalizer pair where $\varphi$ is constant. Following \cite{MO} there exists a unique positive number $\lambda$ and a unique (except by a multiplicative constant) positive function $h:Y\to\mathbb{R}$ such that
	\[\int e^{\mathfrak{l}(\theta,y)}h(\tau_\theta( y)) d\nu(\theta)= \lambda\cdot h(y)\,\forall y.\]
	As  in the above example, we get a normalizer pair by setting $\psi=h$ and $\varphi(y)\equiv \lambda$.
	In this way
	\begin{equation} \label{qued}\overline{l}(\theta,y)= \frac{l(\theta,y)h(\tau_\theta( y))}{\sum_\theta l(\theta,y)h(\tau_\theta(y))}= \frac{l(\theta,y)h(\tau_\theta (y))}{\lambda h(y)}
	\end{equation}
	is a Jacobian.
	 \newline
	6. Following \cite{MO} there exists a unique stationary probability $\rho$ for $(\bar{l},\nu,\tau)$. Furthermore, if we denote by $L:C(Y)\to C(Y)$ the operator given by
	\[L(g)(y) = \int \bar{l}(\theta,y)g(\tau_\theta(y))d\nu(\theta),\] then
	$L^{n}(g)$ converges uniformly to $\int g d\rho$ as $n\to+\infty$.
		
	The proofs of all claims above appear in \cite{MO}. The introduction of  $\rho $, which was considered in Intermediate item 6 in Definition \ref{maindefinition}, is an important issue to get the posterior items.
	
	\bigskip
	
	\noindent
	The posterior items are:\newline
	1. The probability $\pi = \bar{l}(\theta,y)\pi_a(\theta)d\theta d\rho(y)$. We observe that $\pi$ satisfies
	\[\int g(\theta,y)\,d\pi = \int_Y \frac{{l}(\theta,y)h(\tau_\theta(y))}{\lambda h(y)}\pi_a(\theta) g(\theta,y)d\theta d\rho(y).\]
2. The probability density kernel $\pi_p(.|\tau,\psi,y):\Theta\to\mathbb{R}$ given by
\[\pi_p(\theta|\tau,\psi,y) = \frac{l(\theta,y)h(\tau_\theta( y))}{\lambda\,h(y)}\pi_a(\theta)=\bar{l}(\theta,y)\pi_a(\theta).\]	
	3. The probability $\nu_p$ which is the $\theta-$marginal of $\pi$.
	On this way
	\[\int g(\theta)d\nu_p(\theta) = \int_\Theta \left[\int_Y \frac{l(\theta,y)h(\tau_\theta( y))\pi_a(\theta)}{\lambda\,h(y)}\pi_a(\theta) d\rho(y)\right]g(\theta) d\theta\]
	3. The density function $\pi_p:\Theta\to\mathbb{R}$ which satisfies $d\nu_p(\theta) = \pi_p(\theta)d\theta$. Explicitly it is given by
	\[\pi_p(\theta)=\int_Y \frac{l(\theta,y)h(\tau_\theta( y))}{\lambda\,h(y)}\pi_a(\theta) d\rho(y).\]

\end{example}

\bigskip

\section{Entropy, pressure and the principle of minimization of information} \label{epi}

In this section, we consider the prior, intermediate and posterior items as in Definition \ref{maindefinition}. We will show that the posterior items $\pi_p$ and $\pi$ are optimal with respect to a certain variational principle.  In Thermodynamic Formalism (see \cite{PP}), and also in the holonomic setting (see \cite{MO},\cite{LM2}), it corresponds to finding the equilibrium probability maximizing the pressure functional.
In information theory, this formalism corresponds to an information conservation principle and an optimal information processing rule \cite{Ze}.

The main goal of \cite{Ze} was  to derive Bayes' rule as a consequence of the minimization of the information procedure. The reasoning of \cite{Ze} was our main motivation for the present paper.
Minimization of information can be seen as a variational principle for pressure (as in \cite{PP}, \cite{MO}).

We start by recalling  the definition of entropy following the discussion present in \cite{LM2}. 
The entropy of a probability $\pi$ on $\Theta\times Y$ with respect to a probability $\nu$ on $\Theta$ is given by
\[H^{\nu}({\pi}) = -\sup\{\int \log(J)\,d\pi\,|\, J\, \text{is a}\, \nu- \text{Jacobian}\}.\] This supremum is taken over functions $J$ such that $\log(J)$ has a well defined integral with respect to $\pi$. As $J=1$ is a $\nu-$Jacobian we  get $H^{\nu}(\pi)\leq 0$.
If $\rho$ is the $y$-marginal of $\pi$ then
\[H^\nu(\pi) = -D_{KL}(\pi|\nu\times \rho),\]
where $D_{KL}$ is the Kullback-Leibler divergence.    It means that
\begin{equation}\label{KL} H^{\nu}(\pi) =\left\{\begin{array}{ll} -\int \log(J)\,d\pi & \text{if}\,\, d\pi = J(\theta,y)d\nu(\theta) d\rho(y)\\ \\
	-\infty& \begin{array}{c}\text{if}\,\pi\,\text{is not absolutely continuous}\\\text{ with respect to}\,\nu\times\rho\end{array} \end{array}\right.\end{equation}
In such an entropy, the probability $\pi$ and its $y$-marginal $\rho$ are any probabilities and they do not need to be holonomic or stationary.

Let us suppose that there exists a constant $a>0$ such that $a<l(\theta,y)$ for all $(\theta,y)\in\Theta\times Y$ and $a<\psi(y)$ for all $y\in Y$. As these functions are also bounded (see Definition \ref{maindefinition}) it follows from \eqref{barl} that $\bar{l}$ satisfies a similar property and it is bounded. If $\Theta$ and $Y$ are compact metric spaces and the functions $l,\psi$ are continuous then these hypotheses are satisfied. Furthermore we suppose $\pi_a$ is bounded. If $\rho$ is stationary for $(\bar{l},\nu,\tau)$ and $\pi$ is the posterior probability, then (see \cite{LM2})
\[ \int \log(\bar{l}) \,d{\pi} +H^{\nu}({\pi}) = 0 = \sup_{\tilde{\pi}} \int \log(\bar{l} )\,d\tilde{\pi} +H^{\nu}(\tilde{\pi}).\]
If $\bar{l}(\theta,y)$ is a $\nu$-Jacobian and $d\nu = \pi_a(\theta)d\theta$ then $\bar{l} \cdot \pi_a$ is a $d\theta$-Jacobian and as $d\pi =(\bar{l} \cdot \pi_a) d\theta d\rho$  we also get
\[ \int [\log(\bar{l}) +\log(\pi_a)] \,d{\pi} +H^{d\theta}({\pi}) = 0 = \sup_{\tilde{\pi}} \int [\log(\bar{l})+\log(\pi_a)] \,d\tilde{\pi} +H^{d\theta}(\tilde{\pi}).\]
As $\bar{l}(\theta,y)=\frac{{l}(\theta,y)\psi(\tau_\theta(y))}{\psi(y)\varphi(y)}$ and the integral of $\log(\psi(\tau_\theta(y))) - \log(\psi(y))$ over holonomic probabilities is equal to zero, we also get
\begin{align*} 0&= \int [\log({l}(\theta,y)) -\log(\varphi(y))] \,d{\pi} +H^{\nu}({\pi})\\
	&= \sup_{\tilde{\pi}\,holonomic} \int [\log({l}(\theta,y))-\log(\varphi(y))] \,d\tilde{\pi} +H^{\nu}(\tilde{\pi}).
	\end{align*}

Furthermore, we obtain the following main result.

\begin{theorem}\label{teo_eq_pressure} Consider the prior and intermediate items of the IFS Bayesian method, according to Definition \ref{maindefinition}. Furthermore, suppose that $\pi_a$ is bounded and that there exists a constant $a>0$ such that $a<l(\theta,y)$ for all $(\theta,y)\in\Theta\times Y$ and $a<\psi(y)$ for all $y\in Y$.  Let us to consider the following variational problem
\begin{equation} \label{eq_pressure} \sup_{\tilde{\pi}\,holonomic} \int [\log({l}(\theta,y))+\log(\pi_a(\theta))-\log(\varphi(y))] \,d\tilde{\pi} +H^{d\theta}(\tilde{\pi}).
\end{equation}
This supremum is attaining at any\footnote{If there are more than one stationary $\rho$   with respect to $(\bar{{l}},\nu,\tau)$, then we get more than one possible posterior probability $\pi$} posterior probability $\pi$. Furthermore, it is equal to zero.
\end{theorem}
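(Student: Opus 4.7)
The plan is to assemble the three variational identities sketched in the paragraph immediately preceding the theorem. First I would invoke the result from \cite{LM2} that with $\pi$ the posterior probability and $\rho$ stationary for $(\bar{l},\nu,\tau)$,
\[\int \log(\bar{l})\,d\pi + H^{\nu}(\pi) = 0 = \sup_{\tilde{\pi}} \int \log(\bar{l})\,d\tilde{\pi} + H^{\nu}(\tilde{\pi}),\]
where the supremum is over all probabilities on $\Theta\times Y$. The integrability required for this is supplied by the hypotheses: $a < l,\psi$ together with the boundedness of $l,\psi,\pi_a$ ensure via \eqref{barl} and \eqref{varphi} that $\log\bar{l}$, $\log\varphi$, $\log\pi_a$ and $\log l$ are all bounded, so every integral appearing in the argument is finite against any probability.

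Next I would translate from the reference probability $\nu$ to the reference measure $d\theta$. Since $\bar{l}$ is a $\nu$-Jacobian and $d\nu=\pi_a(\theta)\,d\theta$, the function $\bar{l}\cdot \pi_a$ is a $d\theta$-Jacobian, and plugging into \eqref{KL} gives $H^{d\theta}(\tilde{\pi}) = H^{\nu}(\tilde{\pi}) - \int \log(\pi_a)\,d\tilde{\pi}$ in the absolutely continuous case (with both sides equal to $-\infty$ otherwise). Inserting this identity into the previous display yields
\[\int[\log\bar{l} + \log\pi_a]\,d\pi + H^{d\theta}(\pi) = 0 = \sup_{\tilde{\pi}}\int[\log\bar{l} + \log\pi_a]\,d\tilde{\pi} + H^{d\theta}(\tilde{\pi}).\]

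The final step is to use the holonomic property. By \eqref{normalizer} we have $\log\bar{l}(\theta,y) = \log l(\theta,y) + \log\psi(\tau_\theta(y)) - \log\psi(y) - \log\varphi(y)$, and Definition \ref{klr} forces $\int[\log\psi(\tau_\theta(y)) - \log\psi(y)]\,d\tilde{\pi} = 0$ for every holonomic $\tilde{\pi}$. Hence for holonomic $\tilde{\pi}$ the integrand in the supremum reduces to $\log l + \log\pi_a - \log\varphi$, which is exactly the integrand in \eqref{eq_pressure}. Restricting the supremum to holonomic probabilities cannot make it strictly smaller, since the unrestricted supremum is already attained at $\pi$, which is itself holonomic by construction; therefore the restricted and unrestricted suprema coincide and both equal zero, giving the two assertions of the theorem at once. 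The main obstacle I anticipate is purely bookkeeping: making sure the substitution $H^{d\theta}=H^{\nu}-\int\log\pi_a\,d\tilde{\pi}$ transports correctly both at the attained value $\pi$ and uniformly under the supremum, and verifying that the integrands remain well defined across the whole class of holonomic probabilities. Both points reduce to direct manipulation once the lower bounds $a<l,\psi$ and the boundedness of $\pi_a$ are in place.
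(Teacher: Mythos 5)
Your proposal is correct and follows essentially the same route as the paper, whose argument is exactly the chain of identities in the paragraph preceding the theorem: the \cite{LM2} identity for $\bar{l}$ with $H^{\nu}$, the passage to $H^{d\theta}$ via the $d\theta$-Jacobian $\bar{l}\,\pi_a$, and the cancellation of $\log\psi(\tau_\theta(y))-\log\psi(y)$ over holonomic probabilities, with the sup restricted to holonomic measures still attained at the (holonomic) posterior $\pi$. The only minor inaccuracy is the claim that $\log\pi_a$ is bounded (the hypotheses give only an upper bound), but this does not affect the argument beyond the level of rigor of the paper itself.
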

\smallskip

The next example shows how the reasoning of \cite{Ze} fits our IFS Bayesian method.
\smallskip

\begin{example}  \label{zeze} Optimal information processing rule as in \cite{Ze}.

\noindent  Assume the prior items for the IFS Bayesian method as given in Example \ref{maine} or \ref{maine2}. Furthermore, assume that the assumption in Remark \ref{mui} is satisfied.  We observe that the function
${l}(\theta,y) $   is corresponding here to the $ f(y|\theta)$ presented in section \ref{meas} and in \cite{Ze}.
For a such class of examples, we can consider the canonical normalizer pair, that is $\psi=1$ and 	
	\[ \varphi(y) = \int {l}(\theta,y)\pi_a(\theta)d\theta=
\int f(y|\theta) \pi_a(\theta)d\theta. \]
We point out that the probability density function $p$ in \cite{Ze} coincides with our $p$, as defined in \eqref{ljk3}, and moreover $p(y)=\varphi(y)$.

Consider also the stationary probability $\rho=\delta_{y_0}$ and the associated posterior probability $\pi$ satisfying
\[d\pi = \pi_p(\theta|y_0)d\theta d\delta_{y_0}=\frac{f(y_0|\theta)\pi_a(\theta)}{p(y_0)}\,d\theta\, d\delta_{y_0}(y).\]
Such $\pi$ attains the supremum in \eqref{eq_pressure} over all holonomic probabilities $\tilde{\pi}$.
In particular, as the $y$-marginal of $\pi$ is $\delta_{y_0}$, it also assumes such supremum over the subset of holonomic probabilities with $y-$marginal equal to $\delta_{y_0}$. Any such holonomic probability has the form $d\tilde{\pi}=\tilde{\pi}_p(\theta)d\theta d\delta_{y_0}$, where $\tilde{\pi}_{p}(\theta)$ is a probability density function, or else $H^{d\theta}(\tilde{\pi})=-\infty$ and certainly it does not attains the supremum. Restricted to such particular class of  probabilities, that is, holonomic probabilities $\tilde{\pi}$ of the form $d\tilde{\pi}=\tilde{\pi}_p(\theta)d\theta d\delta_{y_0}$, the variational problem \eqref{eq_pressure} of Theorem \ref{teo_eq_pressure} can be rewritten as\footnote{observe the importance of considering $d\tilde{\pi}=\tilde{\pi}_p(\theta)d\theta d\delta_{y_0}$, to get the below expression  from \eqref{eq_pressure}, and that the supremum is over $\tilde{\pi}_p(\theta)$, which is a probability density function and no more a probability; furthermore, for the last term we take into account   \eqref{KL}.  }
	\[ \sup_{\tilde{\pi}_p(\theta)} \int [\log({l}(\theta,y_0))   +\log(\pi_a(\theta)) ]\tilde{\pi}_p(\theta)d\theta - \log(p (y_0))  - \int \log(\tilde{\pi}_p(\theta))\tilde{\pi}_p(\theta)d\theta.\]	
		The above variational problem is exactly equal (changing sign and just replacing the notation of sample from letter $y_0$ by the letter $y$) to the minimization problem (2.6) in \cite{Ze}. Furthermore, from Theorem \ref{teo_eq_pressure} and Example \ref{maine} its supremum value is attained at the posterior probability $\pi=\pi_p(\theta|y_0)d\theta d\delta_{y_0}$, where $\pi_p(\theta|y_0)=\frac{f(y_0|\theta)\pi_a(\theta)}{p(y_0)}$ is given by Bayes' rule \eqref{posteriorBayes}, and its value is equal to zero, that is, 	
	\[0= \int [\log({l}(\theta,y_0))   +\log(\pi_a(\theta)) ]\pi_p(\theta|y_0)d\theta - \log p (y_0)  - \int \log(\pi_p(\theta|y_0))\pi_p(\theta|y_0)d\theta. \]
This corresponds exactly to the solution in the form of Bayes' rule (2.8) in \cite{Ze}, which solves the variational problem (2.6), and also  the one in \cite{Ze}, which is called an optimal information processing rule.

\end{example}

\begin{example}\label{pressureholonomic} Holonomic probabilities for contractible IFS
	
Consider the notations of Example \ref{exholonomic}. In this case $l=e^{\mathfrak{l}}$. Choosing  $\varphi$ as a positive constant (which is the eigenvalue $\lambda$)  we get from Theorem \ref{teo_eq_pressure}
	\[ 	 \int \mathfrak{l}(\theta,y) \,d{\pi} +H^{d\theta}({\pi}) = \log(\lambda) = \sup_{\tilde{\pi}\,holonomic} \int {\mathfrak{l}(\theta,y)} \,d\tilde{\pi} +H^{d\theta}(\tilde{\pi}),\]
	which is the   variational principle for pressure as considered in \cite{MO}. The equivalence between the entropy as defined in \cite{MO} and the above-defined entropy is proved in \cite{LM2}.
	
\end{example} 	

\bigskip

\noindent 
Artur O. Lopes\newline 
Instituto de Matemática e Estatística - UFRGS - Brazil\newline 
arturoscar.lopes@gmail.com

\bigskip

\noindent
Jairo K. Mengue\newline 
Departamento Interdisciplinar - UFRGS - Brazil\newline
jairo.mengue@ufrgs.br


\begin{thebibliography}{}


\bibitem{Abra}
F. Abramovich and Y. Ritov, Statistical Theory, A Concise Introduction, CRC Press (2013)



\bibitem{DLL} M. Denker, A. O. Lopes  and S. R. C. Lopes,
Dynamical hypothesis tests and Decision Theory for Gibbs distributions, Discrete and Continuous Dynamical Systems - Series A - Vol. 43, No. 5, pp. 1942-1958 (2023)







\bibitem{FLL} H. H. Ferreira, A. O. Lopes and S. R. C. Lopes,
Decision Theory and Large Deviations for  Dynamical Hypotheses Tests: the Neyman-Pearson Lemma, min max and Bayesian tests, \emph{Journal of Dynamics and Games} Volume 9, Number 2, April 2022 - pp 125--150

\bibitem{HW}
C. Holmes and S. Walker,
A general framework for updating belief distributions,
\emph{J. R. Stat. Soc. Ser. B. Stat. Methodol}. 7, no. 5, 1103–-1130 (2016)

\bibitem{LLV} A. O. Lopes, S. R. C. Lopes and P. Varandas.
Bayes posterior convergence for loss functions via almost additive Thermodynamic Formalism, Journ. of Statis. Physics,  186:35 (2022)



\bibitem{LM2} A. O. Lopes  and J. K. Mengue,
On information gain, Kullback-–Leibler divergence, entropy production and the involution kernel, Disc. and Cont. Dyn. Syst. Series A, Vol. 42, No. 7, 3593–-3627 (2022)


\bibitem{LO} A. O. Lopes and E.  Oliveira,
Entropy and variational principles for holonomic probabilities of IFS, Disc. and Cont. Dyn. Systems (Series A), Vol 23, N. 3, 937--955 (2009)

\bibitem{MO} J. Mengue and E. Oliveira, Duality results for iterated function systems with a general family of branches, \emph{Stochastics and Dynamics} Vol. 17, No. 03, 1750021 (2017).
	





\bibitem{Nobel1} K. McGoff, S. Mukherjee and A. Nobel, Gibbs posterior convergence and Thermodynamic formalism, Ann. Appl. Probab. 32, no. 1, 461--496  (2022)
	
\bibitem{PP}
W. Parry and M.  Pollicott, Zeta functions and the periodic
orbit structure of hyperbolic dynamics, \emph{Ast\'erisque}
Vol {187-188}, 1990.
\bibitem{Ze}
A. Zellner,  Optimal information processing and Bayes's theorem. \emph{Amer. Statist.} 42 (1988), no. 4, 278–-284.



\end{thebibliography}
\end{document}